\newtheorem{theorem}{Theorem}
\newtheorem{lemma}{Lemma}
\newtheorem{question}{Question}
\title{Numbers with three close factorizations and central lattice points on hyperbolas}
\author{Tsz Ho Chan}
\date{}
\begin{document}
\maketitle
\begin{abstract}
In this paper, we continue the study of three close factorizations of an integer and correct a mistake of a previous result. This turns out to be related to lattice points close to the center point $(\sqrt{N}, \sqrt{N})$ of the hyperbola $x y = N$. We establish optimal lower bounds for $L^1$-distance between these lattice points and the center. We also give some good examples based on polynomials and Pell equations more systematically.
\end{abstract}

\section{Introduction and main results}

Factorization is a fundamental study on integers. In this paper, we continue the study of close factorizations of an integer such as
\[
3950100 = 1881 \cdot 2100 = 1890 \cdot 2090 = 1900 \cdot 2079.
\]
Suppose a positive integer $N$ admits $k$  different factorizations:
\begin{equation} \label{factor1}
N = A B = (A + a_1) (B - b_1) = \cdots = (A + a_{k-1}) (B - b_{k-1})
\end{equation}
for some positive integers $A$, $B$, $a_1 < a_2 < \cdots < a_{k-1}$, and $b_1 < b_2 < \cdots < b_{k-1}$. One can ask how small $a_{k-1}$ and $b_{k-1}$ can be in terms of $N$ and $k$. Rephrasing it another way, one can ask how big $A$ and $B$ can be in terms of $k$, $a_{k-1}$ and $b_{k-1}$. In \cite{C1}, the author proved the following result for numbers with three close factorizations.
\begin{theorem} \label{thm0}
Suppose $N$ satisfies \eqref{factor1} with $k = 3$. Then
\[
A, B \le \frac{1}{4} \max(a_2, b_2) (\max(a_2, b_2) - 1)^2
\]
whenever $\max(a_2, b_2) \ge 4$.
\end{theorem}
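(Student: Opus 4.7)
The plan is to convert the three identities into a small linear system, solve for $A$ and $B$, and then close the bound with a single AM--GM inequality.

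Subtracting $AB$ from each of $(A+a_i)(B-b_i) = AB$ for $i=1,2$ yields the linear relations $a_i B - b_i A = a_i b_i$, and Cramer's rule produces
\[
A = \frac{a_1 a_2(b_2 - b_1)}{D}, \qquad B = \frac{b_1 b_2(a_2 - a_1)}{D}, \qquad D := a_2 b_1 - a_1 b_2,
\]
with $D \ge 1$ because $A$ and $B$ are positive integers.

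Next I would reduce to the \emph{lower-side} configuration in which, after possibly relabeling the three factorizations so that $(A,B)$ has the smallest first coordinate, one has $A + a_i \le B - b_i$ for both $i=1,2$. Rewriting $a_i B - b_i A = a_i b_i$ as $a_i/b_i = A/(B-b_i)$ and using $A < A + a_i \le B - b_i$ gives $a_i < b_i$, so $M := \max(a_2,b_2) = b_2$. Setting $u := a_2 - a_1$ and $v := b_2 - b_1$, the inequality $A + a_2 \le B - b_2$ translates, via the formulas above together with the identity $a_2 b_1 = a_1 b_2 + D$, into $a_1 b_2 (v - u) \ge Du \ge 1$, which forces $v \ge u + 1$.

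Because $b_1 + v = b_2 = M$, the inequality $v \ge u+1$ gives $b_1 + u \le M-1$, and AM--GM then yields $b_1 u \le ((b_1+u)/2)^2 \le (M-1)^2/4$. Hence, using $D \ge 1$ and $b_2 = M$,
\[
B = \frac{b_1 b_2 u}{D} \le b_2 \cdot b_1 u \le \frac{M(M-1)^2}{4},
\]
while $A < A + a_2 \le B - b_2 < B$ gives $A < B$, so $A$ satisfies the same bound.

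The main obstacle is the normalization in the second paragraph. I must verify that any valid triple of distinct factorizations can be relabeled into the lower-side form above (by sorting the associated unordered pairs by their minimum element), and that in the mixed configurations, where the three factorizations do not all lie on the same side of $y = x$, the original value of $M$ is at least the $M$ of the normalized labeling, so that the bound transfers. Once this case analysis is in place, the AM--GM step itself is very short. The hypothesis $M \ge 4$ in the statement is there to rule out the small-parameter values for which no valid three-factorization configuration exists at all.
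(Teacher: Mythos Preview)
The statement you are trying to prove is \emph{false} as written, and the paper says so explicitly: it exhibits
\[
N = 72 = 6\cdot 12 = 8\cdot 9 = 9\cdot 8,\qquad A=6,\ B=12,\ a_1=2,\ b_1=3,\ a_2=3,\ b_2=4,
\]
for which $\max(a_2,b_2)=4$ but $B=12>\tfrac14\cdot 4\cdot 3^2=9$. The paper then replaces Theorem~\ref{thm0} by the corrected Theorem~\ref{thm0.5}, which requires $\max(a_2,b_2)\ge 5$.

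The gap in your proposal is exactly the ``normalization'' paragraph you flagged as the main obstacle, and it cannot be repaired. Your plan is to sort the three unordered pairs $\{x_i,y_i\}$ by their minimum element to reach the lower-side configuration. But in the counterexample the three ordered pairs $(6,12),(8,9),(9,8)$ collapse to only \emph{two} unordered pairs, so after your relabeling you no longer have three distinct factorizations to feed into your AM--GM step. More generally, whenever two of the three factorizations are reflections $(x,y)$ and $(y,x)$ of one another (the ``subcase~4'' situation in the paper's proof of Theorem~\ref{thm0.5}), your reduction simply loses a factorization; and this is precisely the family
\[
N=a_1a_2(a_1+1)(a_2+1),\qquad A=a_1a_2,\ B=(a_1+1)(a_2+1),
\]
from which the paper extracts the counterexample $(a_1,a_2)=(2,3)$.

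Your lower-side argument itself (the identity $a_1b_2(v-u)\ge Du$ forcing $v\ge u+1$, then $b_1u\le(M-1)^2/4$ by AM--GM) is clean and correct for that configuration; it is essentially the content of the paper's Theorem~\ref{thm1} packaged differently. What is genuinely missing is the mixed-configuration analysis. The paper handles it not by a single relabeling but by a case split (Section~3--4): one invokes Theorem~\ref{thm2} on central lattice points to get $B<M^2+2M-1$ in the easy range $M\ge 7$, and then a four-subcase analysis (including the reflected-pair subcase above) pushes the threshold down to $M\ge 5$. Your final sentence, that $M\ge 4$ ``rules out the small-parameter values for which no valid three-factorization configuration exists,'' is therefore mistaken: configurations with $M=4$ do exist and violate the stated bound.
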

\noindent However, it contains a slight error as the example
\[
N = 72 = 6 \cdot 12 = 8 \cdot 9 = 9 \cdot 8 \; \text{ with } \; A = 6, \; B = 12, \; a_1 = 2, \; b_1 = 3, \; a_2 = 3, \; b_2 = 4
\]
shows that $12 \le \frac{1}{4} \cdot 4 \cdot (4 - 1)^2$ fails. We shall give a correct and more conceptual proof of the following modified version.
\begin{theorem} \label{thm0.5}
Suppose $N$ satisfies \eqref{factor1} with $k = 3$. Then
\[
A, B \le \frac{1}{4} \max(a_2, b_2) (\max(a_2, b_2) - 1)^2
\]
whenever $\max(a_2, b_2) \ge 5$. Note: The proof is much simpler in the range $\max(a_2, b_2) \ge 7$.
\end{theorem}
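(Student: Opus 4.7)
The plan is to translate the three factorization identities into an algebraic system, solve it, and bound the resulting expressions. Expanding $AB = (A + a_i)(B - b_i)$ for $i = 1, 2$ yields the linear system $a_i B - b_i A = a_i b_i$ in the unknowns $A$ and $B$. Setting $d := a_2 b_1 - a_1 b_2$, Cramer's rule gives
\[
A = \frac{a_1 a_2 (b_2 - b_1)}{d}, \qquad B = \frac{b_1 b_2 (a_2 - a_1)}{d}.
\]
Since $b_i/a_i = B/(A + a_i)$ is strictly decreasing in $i$, $d > 0$, and as an integer $d \ge 1$. The naive bounds $A \le a_1 a_2 (b_2 - b_1)$ and $B \le b_1 b_2 (a_2 - a_1)$ give only $A, B \le m(m-1)^2$, a factor of $4$ too weak, so the core of the proof is sharpening these.

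Setting $u = a_2 - a_1$ and $v = b_2 - b_1$, the coupling $d = u b_1 - v a_1 \ge 1$ (equivalently $u b_2 - v a_2 \ge 1$ after $a_1 = a_2 - u$, $b_1 = b_2 - v$) is a unimodular-type condition that prevents the four parameters from all reaching their naive maxima simultaneously. For the easy range $m \ge 7$, I would combine this constraint with AM--GM bounds $a_1 u \le a_2^2/4$ and $b_1 v \le b_2^2/4$; once $m$ is large enough, these leave sufficient slack to deduce $A, B \le \frac{1}{4} m(m-1)^2$ directly. For the harder range $m \in \{5, 6\}$, no slack remains at the threshold, so I would enumerate the finitely many admissible quadruples $(a_1, a_2, b_1, b_2)$ with $1 \le a_1 < a_2 \le m$, $1 \le b_1 < b_2 \le m$, $d \ge 1$, and the divisibility conditions $d \mid a_1 a_2 v$, $d \mid b_1 b_2 u$, verifying the bound in each case.

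The extremal configuration $(a_1, a_2, b_1, b_2) = (m-2, m-1, m-1, m)$ with $d = 1$ yields $B = m(m-1)$, which equals $\frac{1}{4} m(m-1)^2$ exactly when $m = 5$; the same configuration at $m = 4$ gives $B = 12 > 9$, reproducing the counterexample $N = 72$ and explaining the threshold. The main obstacle will be extracting the $1/4$ cleanly in the easy range: AM--GM alone is off by exactly this factor, so the unimodular constraint $d \ge 1$ must be invoked in tandem. The tightness at $m = 5$ also means the finite case check for $m \in \{5, 6\}$ admits essentially no slack.
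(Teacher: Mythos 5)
Your starting point is sound: the Cramer-rule formulas $A = a_1 a_2 (b_2 - b_1)/d$, $B = b_1 b_2 (a_2 - a_1)/d$ with $d = a_2 b_1 - a_1 b_2 \ge 1$ are exactly \eqref{basic2}--\eqref{basic3} in the paper, and your identification of the extremal quadruple $(m-2, m-1, m-1, m)$ with $d=1$ and of why $m=4$ fails is correct. But the plan for turning $d \ge 1$ into the factor $1/4$ has a real gap, and it is precisely the place where the paper has to work hardest.

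First, the AM--GM bounds you name are the wrong ones. You propose $a_1 u \le a_2^2/4$ and $b_1 v \le b_2^2/4$ with $u = a_2 - a_1$, $v = b_2 - b_1$, but $A$ contains the cross product $a_1 v$ and $B$ contains $b_1 u$, not $a_1 u$ or $b_1 v$. Those two bounds control $A \cdot B = (a_1 u)(b_1 v) a_2 b_2/d^2$, hence $N$, but $\max(A, B)$ can sit as much as $\sqrt{N} + m$ above $\sqrt{N}$, and the resulting estimate $\max(A,B) \lesssim m^3/4 + m$ actually exceeds $\tfrac14 m(m-1)^2 = (m^3 - 2m^2 + m)/4$, so this route does not close. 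The useful identities are the companion ones $A + a_2 = a_2 b_1 u/d$ and $B - b_2 = a_1 b_2 v/d$, and AM--GM must be applied to the pairs $(b_1, u)$ and $(a_1, v)$. That works cleanly when you also know $a_1 > b_1$, because then $b_1 + u \le a_1 - 1 + u = a_2 - 1$, giving $A < A + a_2 \le a_2(a_2-1)^2/4$ when $d = 1$; this recovers Theorem~\ref{thm1} (the paper proves it by calculus instead). But $a_1 > b_1$ is equivalent to $A \ge B$, and by the obvious mirror symmetry the same argument covers $B - b_2 \ge A + a_2$. What it does not cover is the straddling configuration $A < \sqrt{N} < A + a_2$, $B - b_2 < \sqrt{N} < B$, where the sign of $a_i - b_i$ is not controlled and the pair sums $b_1 + u$, $a_1 + v$ can exceed $m-1$. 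This is precisely the paper's ``Case 2,'' and it is where the paper spends four separate subcases and invokes Theorem~\ref{thm2} (the $L^1$-distance lower bound $\max |x_i - y_i| \gtrsim 2N^{1/4}$) to get $a_2 + b_2 > 2\sqrt{A} - 1$, hence $A < m^2$ and $B < m^2 + 2m$. Your proposal contains no substitute for this step.

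Second, the claim that ``once $m$ is large enough these leave sufficient slack'' is not substantiated, and a finite check at $m \in \{5,6\}$ does not rescue it, since the straddling case must be handled for \emph{all} $m$. You do flag the obstacle yourself (``the main obstacle will be extracting the $1/4$ cleanly\dots so the unimodular constraint must be invoked in tandem''), and the observation that $a_1 u = a_2^2/4$ and $b_1 v = b_2^2/4$ force $d = 0$ is a genuine tension worth exploiting; but quantifying it to beat $\tfrac14 m(m-1)^2$ in the straddling regime needs a new idea. In the paper that idea is exactly the reduction to the hyperbola-center estimate of Theorem~\ref{thm2}, which your proposal does not reach for. As written, the proposal proves the one-sided cases (essentially Theorem~\ref{thm1} and its mirror) but leaves the straddling case --- including the borderline family $b_i = a_i + 1$ that actually realizes equality at $m = 5$ --- unaddressed beyond a promise to enumerate small cases.
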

\noindent It is closely related to the following result with a shorter and more intuitive proof than \cite{C1}.
\begin{theorem} \label{thm1}
Suppose $N$ satisfies \eqref{factor1} with $k = 3$ with $B \le \sqrt{N} \le A$. Then
\[
A \le \frac{1}{4} a_2 (a_2 - 3) (a_2 + 1).
\]
Moreover, the above equality can be attained if and only if $a_2 > 3$ is odd and
\[
A = \frac{a_2 (a_2 + 1) (a_2 - 3)}{4}, \; B = \frac{(a_2 - 2) (a_2 - 1)^2}{4}, \; a_1 = \frac{a_2 + 1}{2}, \; b_1 = \frac{a_2 - 1}{2}, \; b_2 = a_2 - 2.
\]
Note: $a_{2}$ is the largest among $a_1, a_2, b_1, b_2$ as
\begin{equation} \label{ab}
A B = (A + a_i)(B - b_i) \; \text{ implies } \; a_i B = b_i A + a_i b_i > b_i A \; \text{ and } \; a_i > \frac{A}{B} b_i \ge b_i.
\end{equation}
\end{theorem}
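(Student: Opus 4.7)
The plan is to convert the two nontrivial identities in \eqref{factor1} into a $2\times 2$ linear system for $(A,B)$, read off closed-form expressions for $A$ and $B$ via Cramer's rule, and then combine the integrality of a certain determinant with the constraint $a_i > b_i$ from \eqref{ab} and a single application of AM--GM. First I would expand $AB = (A+a_i)(B-b_i)$ to obtain $a_i B - b_i A = a_i b_i$ for $i = 1, 2$. Solving this system and setting $D := a_2 b_1 - a_1 b_2$ produces the clean identities
\[
A D = a_1 a_2 (b_2 - b_1), \qquad B D = b_1 b_2 (a_2 - a_1).
\]
Since $A, B > 0$ and $a_2 > a_1$ and $b_2 > b_1$, the integer $D$ is positive, so $D \ge 1$.

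Next I would rewrite $D = (a_2 - a_1)b_1 - a_1(b_2 - b_1)$ and use $b_1 \le a_1 - 1$, a consequence of \eqref{ab} since $a_1 > b_1$ holds strictly for integers. Substituting $a_1(b_2 - b_1) = (a_2 - a_1) b_1 - D$ into $A = a_1 a_2 (b_2 - b_1)/D$ then yields
\[
A \;=\; \frac{a_2 \bigl[(a_2 - a_1) b_1 - D\bigr]}{D} \;\le\; \frac{a_2 (a_2 - a_1)(a_1 - 1)}{D} - a_2 \;\le\; a_2 (a_2 - a_1)(a_1 - 1) - a_2,
\]
using $b_1 \le a_1 - 1$ and $D \ge 1$ in turn. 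Since $(a_2 - a_1) + (a_1 - 1) = a_2 - 1$, AM--GM gives $(a_2 - a_1)(a_1 - 1) \le \bigl(\tfrac{a_2 - 1}{2}\bigr)^2$, so
\[
A \;\le\; \frac{a_2 (a_2 - 1)^2}{4} - a_2 \;=\; \frac{a_2 \bigl[(a_2 - 1)^2 - 4\bigr]}{4} \;=\; \frac{a_2 (a_2 - 3)(a_2 + 1)}{4}.
\]

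For the equality characterization I would trace each of the three inequalities back. AM--GM tightness requires $a_2 - a_1 = a_1 - 1$, forcing $a_2$ to be odd and $a_1 = (a_2 + 1)/2$; tightness of $b_1 \le a_1 - 1$ gives $b_1 = (a_2 - 1)/2$; and $D = 1$, together with the defining formula $D = a_2 b_1 - a_1 b_2$, then forces $b_2 = a_2 - 2$. A final check shows that $a_2 > 3$ is exactly what is needed for the remaining ordering constraints ($b_1 < b_2$, $a_1 < a_2$, $a_i > b_i$) to be consistent, so the extremal family is precisely as stated. I do not anticipate any serious obstacle: the proof reduces to one linear-algebra step plus one AM--GM inequality, with care needed only in verifying that the extremal parameters satisfy all integrality and ordering constraints.
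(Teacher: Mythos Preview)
Your proof is correct and follows the same overall strategy as the paper: derive $A = a_1 a_2 (b_2 - b_1)/D$ with $D = a_2 b_1 - a_1 b_2$, then bound using $b_1 \le a_1 - 1$ and $D \ge 1$. The one substantive difference is in the final optimization over $a_1$: the paper introduces a real parameter $\alpha$ via $a_2 = (1+\alpha)a_1$, writes $A \le t(\alpha)$, and finds the maximum by computing $t'(\alpha)$, whereas you observe directly that $(a_2 - a_1)(a_1 - 1) \le \bigl(\tfrac{a_2-1}{2}\bigr)^2$ by AM--GM. Your route is cleaner and more elementary (no calculus, no auxiliary parameter), and it makes the equality case transparent since AM--GM tightness immediately forces $a_1 = (a_2+1)/2$; the paper's calculus argument reaches the same critical point but with more bookkeeping.
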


Equivalently, one can interpret $k$ close factorizations of $N$ as $k$ close lattice points on the hyperbola $x y = N$. Previously, Cilleruelo and Jim\'{e}nez-Urroz \cite{CJ1, CJ2} and Granville and Jim\'{e}nez-Urroz \cite{GJ} did some study on this and related problem. Suppose $(A, B)$, $(A + a_1, B - b_1)$ and $(A + a_2, B - b_2)$ with $1 \le a_1 < a_2$ and $1 \le b_1 < b_2$ are three lattice points on the hyperbola $x y = N$. Then Theorem \ref{thm0} or \ref{thm1} tells us that
\begin{equation} \label{lattice1}
\max(a_2, b_2) > (4 \max(A, B))^{1/3} \ge 2^{2/3} N^{1/6}.
\end{equation}
On the other hand, there are infinitely many $N$'s (namely the example in Theorem \ref{thm1}) such that the hyperbola $x y = N$ contains three such lattice points with
\begin{equation} \label{lattice2}
a_2 - 2 \le (4 B)^{1/3} < (4 A)^{1/3} \; \; \text{ or } \; \; b_2 < a_2 < 2^{2/3} N^{1/6} + 2. 
\end{equation}
Now, observe that
\[
A - B = \frac{a_2 (a_2 + 1)(a_2 - 3)}{4} - \frac{(a_2 - 2) (a_2 - 1)^2}{4} = \frac{a_2^2 - 4 a_2 + 1}{2} > \frac{a_2^2}{4} > \frac{N^{1/3}}{2}
\]
when $a_2 > 8$. Hence, either $A - \sqrt{N}$ or $\sqrt{N} - B$ is greater than $\frac{N^{1/3}}{4}$. So, the above three-close-lattice-points examples cannot be too close to the center point $(\sqrt{N}, \sqrt{N})$ of the hyperbola $x y = N$. This phenomenon was, for example, noticed by Cilleruelo and Jim\'{e}nez-Urroz \cite{CJ2}. It leads to the following question.
\begin{question}
Suppose $(x_1, y_1)$, $(x_2, y_2)$, $(x_3, y_3)$ are three positive lattice points on $x y = N$. What can we say about the maximal $L^1$-distance
\[
\max_{i} |x_i - \sqrt{N}| + |y_i - \sqrt{N}| \; \; \Bigl( = \max_{i} |x_i - y_i| \text{ as } x_i \le \sqrt{N} \le \frac{N}{x_i}  \text{ or } x_i \ge \sqrt{N} \ge \frac{N}{x_i}  \Bigr)
\]
between the lattice points and the center $(\sqrt{N}, \sqrt{N})$?
\end{question}
We have the following optimal answer which reiterates \cite{CJ2} on the contrast between the exponent $1/4$ and the exponent $1/6$ in \eqref{lattice1} and \eqref{lattice2}.
\begin{theorem} \label{thm2}
Suppose $x y = N$ has three positive lattice points $(x_1, y_1)$, $(x_2, y_2)$ and $(x_3, y_3)$. Then, for $N \ge 6$,
\[
\max_{i} |x_i - y_i| \ge \Big\lfloor 2 N^{1/4} + \frac{1}{2 N^{1/4} - 1} \Big\rfloor.
\]
Moreover, the lower bound is best possible. Here $\lfloor x \rfloor$ is the largest integer not exceeding $x$.
\end{theorem}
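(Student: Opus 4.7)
The plan is to translate the question into an arithmetic statement about the distinct values of $|x-y|$ taken over the factorizations $xy = N$, and then derive the bound from a simple integer-gap estimate plus an elementary real-number inequality. Each factorization with $x \ne y$ produces exactly two lattice points sharing the same $|x-y|$, and the point $(\sqrt{N},\sqrt{N})$ (which is a lattice point iff $N$ is a perfect square) contributes one more with $|x-y|=0$. Letting $d_1 < d_2 < \cdots$ be the sorted distinct values of $|x-y|$, any three distinct lattice points must span at least two of these values, and the minimum of $\max_i |x_i - y_i|$ over all such triples equals exactly $d_2$ (achieved by keeping the innermost pair plus one outer point, or in the square case, the center plus one outer pair). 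Hence it suffices to show $d_2 \ge \lfloor 2 N^{1/4} + 1/(2N^{1/4}-1)\rfloor$.

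For each factorization set $s = x + y$ and $d = |x-y|$, so that $s, d$ are non-negative integers with $s^2 - d^2 = 4N$ and, by AM-GM, $s \ge 2\sqrt{N}$. Let $s_1 < s_2$ be the sums corresponding to $d_1 < d_2$; integrality of $s_1, s_2$ gives $s_2 \ge s_1 + 1 \ge 2\sqrt{N} + 1$, so
\[
d_2^2 \;=\; s_2^2 - 4N \;\ge\; (2\sqrt{N}+1)^2 - 4N \;=\; 4\sqrt{N} + 1,
\]
and since $d_2$ is a positive integer, $d_2 \ge \lceil \sqrt{4\sqrt{N}+1}\rceil$. It remains to verify $\lceil \sqrt{4\sqrt{N}+1}\rceil \ge \lfloor 2N^{1/4} + 1/(2N^{1/4}-1)\rfloor$. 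Setting $v = 2N^{1/4}$ and $r = v + 1/(v-1)$, this is implied by $v^2 + 1 > (r-1)^2$, which after clearing denominators and expanding reduces to the polynomial inequality $2v^3 - 6v^2 + 6v - 3 > 0$. Its derivative $6(v-1)^2$ is non-negative and its value at $v = 2$ is $1$, so the polynomial is at least $1$ on $[2,\infty)$; since $N \ge 1$ already gives $v \ge 2$, the inequality holds a fortiori for every $N \ge 6$, finishing the lower bound.

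For sharpness I would exhibit the family $N_k = k^2 (k+1)^2$ for $k \ge 2$ (so $N_k \ge 36$). Then $\sqrt{N_k} = k(k+1)$ is an integer, and the factorization $N_k = k^2 \cdot (k+1)^2$ gives a lattice point with $|x-y| = 2k+1$. Using $\gcd(k,k+1) = 1$, every divisor of $N_k$ factors uniquely as $a b$ with $a \mid k^2$ and $b \mid (k+1)^2$; a short case check (if $a b > k^2$ with $b \ge 2$ then $a > k^2/2$ forces $a = k^2$ and then $b < 1 + 1/k \le 2$, impossible) shows no divisor lies strictly between $k^2$ and $k(k+1)$, so indeed $d_2 = 2k+1$. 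Since $4\sqrt{N_k}+1 = (2k+1)^2$ makes the estimate in paragraph two an equality, and a routine algebraic check yields
\[
2k+1 \;\le\; 2\sqrt{k(k+1)} + \frac{1}{2\sqrt{k(k+1)}-1} \;<\; 2k+2,
\]
the floor in the theorem also equals $2k+1$, so the bound is sharp. The main obstacle I anticipate is not any single hard estimate but rather careful bookkeeping: making the reduction to $d_2$ airtight across all configurations of three lattice points, and lining up the floor and ceiling functions in the final comparison so nothing is off by one.
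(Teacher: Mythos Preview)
Your lower-bound argument is correct and genuinely different from the paper's. The paper fixes two points on one side of $\sqrt{N}$, bounds the inner point $x_2$ via $|x_2-y_2|\le 2N^{1/4}-1/2$, and then expands $(x_2-b)(y_2+a)=x_2y_2$ into a quadratic inequality in $a+b$ and $y_2-x_2$ to force $|x_1-y_1|$ large. You instead pass to the variables $s=x+y$, $d=|x-y|$ with $s^2-d^2=4N$, observe that three distinct lattice points force at least two distinct $d$-values, and use the integer gap $s_2\ge s_1+1\ge 2\sqrt{N}+1$ to get $d_2^2\ge 4\sqrt{N}+1$ in one line. Your route is shorter and more conceptual; the paper's route is more hands-on but feeds directly into the later sections on one-sided configurations. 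Your sharpness family $N_k=k^2(k+1)^2$ is also different from the paper's $N=K(K+1)^2(K+2)$, and your verification that $\lfloor 2N_k^{1/4}+1/(2N_k^{1/4}-1)\rfloor=2k+1$ is fine.

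One genuine slip: the ``short case check'' that no divisor of $k^2(k+1)^2$ lies in $(k^2,k(k+1))$ is not valid as written. From $ab>k^2$ and $b\ge 2$ you cannot conclude $a>k^2/2$; that inequality goes the wrong way. (The claim itself appears to be true, and a correct proof can be extracted from $a\mid(ab-k^2)<k$ and $b\mid((k+1)^2-ab)\in[k+2,2k]$, which forces $(a+1)b=(k+1)^2$ and then a contradiction --- but this is more than a one-line check.) Fortunately you do not need this claim at all: you have already exhibited three lattice points with $\max_i|x_i-y_i|=2k+1$, and you have verified the floor in the theorem equals $2k+1$, so sharpness is established directly. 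Alternatively, the explicit factorization gives $d_2\le 2k+1$, and combining this with your own lower bound $d_2\ge\lfloor\cdot\rfloor=2k+1$ yields $d_2=2k+1$ without ever analyzing the divisor gap. Simply drop the flawed case check and invoke one of these two observations instead.
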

\noindent Similar to the polynomial idea in \cite{GJ}, based on \eqref{poly-xy} and \eqref{poly-N} with $K = 99$, we have
\[
N = 99990000 = 9900 \cdot 10100 = 9999 \cdot 10000 = 10100 \cdot 9900
\]
with
\[
10100 - 9900 = 200 \; \; \text{ and } \; \; 2 N^{1/4} + \frac{1}{2 N^{1/4} - 1} = 200.00397 \ldots .
\]

However, when one looks at the extremal examples for Theorem \ref{thm2}, not all of the three lattice points are to the right or to the left of the center point. This motivates the next question.
\begin{question}
Suppose $(x_1, y_1)$, $(x_2, y_2)$, $(x_3, y_3)$ are three lattice points on $x y = N$ with $\sqrt{N} < x_1 < x_2 < x_3$. What can we say about $\max_{i} |x_i - y_i|$?
\end{question}
\begin{theorem} \label{thm3}
Suppose $x y = N$ has three lattice points $(x_1, y_1)$, $(x_2, y_2)$ and $(x_3, y_3)$ with $\sqrt{N} < x_1 < x_2 < x_3$. Then, for $N \ge 2^{20}$,
\[
\max_{i} |x_i - y_i| \ge \lfloor 2 \sqrt{2} N^{1/4} + 1/2 \rfloor.
\]
Moreover, the lower bound is best possible. Note: The lower bound $\max_{i} |x_i - y_i| > 2 \sqrt{2} N^{1/4} - 2$ is much easier to establish.
\end{theorem}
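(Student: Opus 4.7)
The plan is to linearize the problem by introducing $t_i = x_i + y_i$ and $s_i = x_i - y_i$, which satisfy
\[
s_i^2 = (x_i+y_i)^2 - 4 x_i y_i = t_i^2 - 4N.
\]
Each $t_i$ is a positive integer (sum of two positive integers), and the hypothesis $x_i > \sqrt{N} > y_i$ also makes each $s_i$ a positive integer, so $s_i \ge 1$. Since $f(x) = x + N/x$ is strictly increasing on $(\sqrt{N},\infty)$, the assumption $x_1 < x_2 < x_3$ gives $t_1 < t_2 < t_3$, three distinct positive integers, whence $t_3 \ge t_1 + 2$. The same monotonicity also yields $s_1 < s_2 < s_3$, so $\max_i |x_i - y_i| = s_3$.

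Combining these observations gives the key chain
\[
s_3^2 = t_3^2 - 4N \ge (t_1+2)^2 - 4N = s_1^2 + 4 t_1 + 4 \ge 4 t_1 + 5.
\]
Since $s_1 \ge 1$ forces $t_1^2 = s_1^2 + 4N \ge 4N+1 > 4N$ and hence $t_1 > 2\sqrt{N}$, I obtain $s_3^2 > 8\sqrt{N} + 5$. To convert this into the stated floor bound I would show $\sqrt{8\sqrt{N}+5} > 2\sqrt{2}\, N^{1/4} - 1/2$; squaring (both sides are positive for $N \ge 1$) reduces this to $5 > 1/4 - 2\sqrt{2}\, N^{1/4}$, which is trivial. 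Writing $L := \lfloor 2\sqrt{2}\, N^{1/4} + 1/2\rfloor$, the inequality $L \le 2\sqrt{2}\, N^{1/4} + 1/2$ rearranges to $L - 1 \le 2\sqrt{2}\, N^{1/4} - 1/2 < s_3$, and integrality of $s_3$ delivers $s_3 \ge L$, as required. (Note also the weaker $s_3 > 2\sqrt{2}\,N^{1/4} - 2$ mentioned in the theorem is immediate from $s_3 > \sqrt{8\sqrt{N}+5}$.)

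For the sharpness claim I would turn to the Pell equation $u^2 - 8 p^2 = 1$, whose positive solutions $(u,p) = (3,1), (17,6), (99,35), (577,204),\ldots$ form an infinite family. For such a solution, set $k = (u-1)/2$, so that $k(k+1) = 2 p^2$, and take $N = (k-1)k(k+1)(k+2)$. Then $N$ has the three factorizations
\begin{align*}
(x_1, y_1) &= (k(k+1),\,(k-1)(k+2)), \\
(x_2, y_2) &= (k(k+2),\,(k-1)(k+1)), \\
(x_3, y_3) &= (k(k+1)+2p,\,k(k+1)-2p),
\end{align*}
all satisfying $x_i > \sqrt{N}$ and $x_1 < x_2 < x_3$ (the final inequality reducing to $2p > k$, which follows from $2p^2 = k(k+1) > k^2/2$). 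A direct computation gives $x_3 - y_3 = 4 p$ and verifies $4p = \lfloor 2\sqrt{2}\, N^{1/4} + 1/2 \rfloor$, so the bound is attained. The solutions $(17,6), (99,35), \ldots$ yield $N = 5040, 5997600, \ldots$; the hypothesis $N \ge 2^{20}$ in the theorem simply excludes the smallest example from consideration. The main delicate point I foresee is the floor/ceiling bookkeeping in the integer step, together with checking that no \emph{other} triple of divisors of these Pell $N$ produces a smaller maximal $|x_i - y_i|$.
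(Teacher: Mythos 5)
Your proposal is correct, and it takes a genuinely different (and cleaner) route than the paper. The paper parameterizes the three points as $N = AB = (A+a_1)(B-b_1) = (A+a_2)(B-b_2)$, sets $d_i = a_i - b_i$, shows $d_2 > d_1 \ge 1$, writes $x_3 - y_3 = \frac{d_2 B}{b_2} + b_2 \ge 2\sqrt{d_2 B}$ via AM--GM, and then performs a case analysis: when $d_2 \ge 3$ it gets a better constant outright; when $d_2 = 2, d_1 = 1$ it needs several sub-cases based on the size of $A - B$ compared to $N^{0.2}$, the size of $b_2$ compared to $\sqrt{2}\,b_1$, Lemma~\ref{lem1} and Lemma~\ref{lem2}, and the inequality \eqref{lem0}. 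These estimates are where the hypothesis $N \ge 2^{20}$ is consumed. Your change of variables $t_i = x_i + y_i$, $s_i = x_i - y_i$ with the single invariant $t_i^2 - s_i^2 = 4N$ absorbs all of this: the integer spacing $t_3 \ge t_1 + 2$ (which is exactly $d_2 \ge 2$ in the paper's notation) together with $s_1 \ge 1$ and $t_1 > 2\sqrt{N}$ gives $s_3^2 > 8\sqrt{N} + 5 > (2\sqrt{2}N^{1/4})^2$ in one uniform chain, with no case split on $d_2$ and no need for $N \ge 2^{20}$ at all. So your lower bound is both simpler and slightly stronger than the paper's. The floor bookkeeping you describe is correct: $L - 1 \le 2\sqrt{2}N^{1/4} - 1/2 < s_3$ forces $s_3 \ge L$ by integrality, and $2\sqrt{2}N^{1/4} - 1/2$ is never an integer, so no edge case arises.

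For the sharpness part, your construction is the same as the paper's, just re-parameterized: the paper uses $x^2 - 2y^2 = 1$ while you use $u^2 - 8p^2 = 1$ (with $y = 2p$), and both produce $N = (k-1)k(k+1)(k+2)$ with $k(k+1) = 2p^2$, hence $N = 5040, 5997600, \dots$. The one piece you should write out explicitly is the estimate showing $0 < 4p - 2\sqrt{2}N^{1/4} < 1/2$. Since $N^{1/4} = \sqrt{k(k+1)}\bigl(1 - \tfrac{2}{k(k+1)}\bigr)^{1/4} = p\sqrt{2}\,\bigl(1 - \tfrac{1}{p^2}\bigr)^{1/4}$, one has $4p - 2\sqrt{2}N^{1/4} = 4p\bigl[1 - (1 - p^{-2})^{1/4}\bigr] < 4p\bigl(\tfrac{1}{4p^2} + \tfrac{1}{4p^4}\bigr) = \tfrac{1}{p} + \tfrac{1}{p^3} < \tfrac{1}{2}$ for $p \ge 6$, using the same inequality \eqref{lem0} the paper invokes; this pins down $\lfloor 2\sqrt{2}N^{1/4} + 1/2\rfloor = 4p = x_3 - y_3$. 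Finally, your worry about ``other triples of divisors of these Pell $N$'' is unfounded: the theorem asserts a lower bound over all triples, so exhibiting one triple that attains $L$ already shows the bound is best possible; other triples with larger $\max_i |x_i - y_i|$ do no harm.
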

\noindent For instance, we have the following example as a consequence:
\[
N = 5997600 = 2450 \cdot 2448 = 2499 \cdot 2400 = 2520 \cdot 2380
\]
with
\[
|2520 - 2380| = 140 \; \; \text{ and } \; \; 2 \sqrt{2} N^{1/4} = 139.9714\ldots .
\]
Our construction using solutions of Pell equations may be compared to that in \cite{CJ2} using continued fraction convergents. We leave the interested readers to investigate optimal lower bounds for four or more lattice points.

\bigskip

The paper is organized as follows. First, we will establish Theorem \ref{thm1} and use it to prove Theorem \ref{thm0.5} when $\max(a_2, b_2) \ge 7$ using Theorem \ref{thm2}. Then we will prove the entire Theorem \ref{thm0.5} via some finer analysis of lattice points close to the center point. Next, we will prove Theorem \ref{thm2} and show some polynomial examples. Then we will show Theorem \ref{thm3} with the easier lower bound $2 \sqrt{2} N^{1/4} - 2$ which illustrates the essence of the method. Lastly, we will prove the entire Theorem \ref{thm3} and provide examples from solutions of a Pell equation and a polynomial.

\section{Proof of Theorem \ref{thm1}}

\begin{proof}
Expanding $A B = (A + a_i) (B - b_i)$ and simplifying, we get $a_i B - b_i A = a_i b_i$. Dividing through by $a_i A$, we have
\begin{equation} \label{basic1}
\frac{B}{A} - \frac{b_i}{a_i} = \frac{b_i}{A}.
\end{equation}
Applying \eqref{basic1} with $i = 1, 2$ and subtracting the two equations, we obtain
\begin{equation} \label{basic2}
\frac{b_1}{a_1} - \frac{b_2}{a_2} = \frac{b_2 - b_1}{A} \; \; \text{ or } \; \; A = \frac{a_2 a_1 (b_2 - b_1)}{D}
\end{equation}
where
\[
D := a_2 b_1 - b_2 a_1 = (a_2 - a_1) b_1 - (b_2 - b_1) a_1 > 0.
\]
Analogously, dividing $a_i B - b_i A = a_i b_i$ through by $b_i B$ and subtracting the two equations with $i = 1, 2$, we also have
\begin{equation} \label{basic3}
\frac{a_2}{b_2} - \frac{a_1}{b_1} = \frac{a_2 - a_1}{B} \; \; \text{ or } \; \; B = \frac{b_2 b_1 (a_2 - a_1)}{D}.
\end{equation}
Now, let $C = a_2 = (1 + \alpha) a_1$ for some real number $\alpha > 0$. Then
\[
a_1 = \frac{C}{1 + \alpha} \; \; \text{ and } \; \; a_2 - a_1 = \alpha a_1 = \frac{\alpha C}{1 + \alpha}.
\]
From the definition of $D$ and $b_1 \le a_1 - 1$ via \eqref{ab},
\[
b_2 - b_1 = \frac{(a_2 - a_1) b_1 - D}{a_1} \le \Bigl[ \frac{\alpha C}{1 + \alpha}  \Bigl( \frac{C}{1 + \alpha} - 1 \Bigr) - D \Bigr] \frac{1 + \alpha}{C} = \frac{\alpha C}{1 + \alpha} - \alpha - \frac{D (1 + \alpha)}{C}.
\]
Hence, it follows from \eqref{basic2} that
\begin{align*}
A =& \frac{a_2 a_1 (b_2 - b_1)}{D} \le \frac{C}{D} \cdot \frac{C}{1 + \alpha} \cdot \Bigl( \frac{\alpha C}{1 + \alpha} - \alpha - \frac{D (1 + \alpha)}{C} \Bigr) \\
=& \frac{\alpha C^3}{D (1 + \alpha)^2} - \frac{\alpha C}{D (1 + \alpha)} - C =: t(\alpha).
\end{align*}
The derivative
\[
t'(\alpha) = \frac{(1 - \alpha) C^3}{D (1 + \alpha)^3} - \frac{C^2}{D (1 + \alpha)^2}
\]
is positive when $0 < \alpha < \frac{C - 1}{C + 1}$ and is negative when $\alpha > \frac{C - 1}{C + 1}$. Therefore, $t(\alpha)$ has a maximum at $\alpha = \frac{C - 1}{C + 1}$ and
\[
A \le \frac{\frac{C - 1}{C + 1} \cdot C^3}{(1 + \frac{C - 1}{C + 1})^2} - \frac{\frac{C - 1}{C + 1} \cdot C}{(1 + \frac{C - 1}{C + 1})} - C = \frac{C (C - 3) (C + 1)}{4}.
\]
as $D \ge 1$. One can see that the above equality can be achieved if and only if $D = 1$, $a_1 = \frac{C}{1 + \frac{C - 1}{C+1}} = \frac{C+1}{2}$ and $b_1 = a_1 - 1 = \frac{C - 1}{2}$ are integers (i.e., $C$ is odd). Then $b_2 = \frac{a_2 b_1 - D}{a_1} = C - 2$. Hence,
\[
a_2 = 2K + 1, \; a_1 = K + 1, \; b_2 = 2K - 1, \; b_1 = K, \; A = (2K+1)(K+1)(K - 1), \; B = (2 K - 1) K^2
\]
and we have Theorem \ref{thm1}.
\end{proof}

\section{Proof of Theorem \ref{thm0.5} for $\max(a_2, b_2) \ge 7$}

\begin{proof}
Recall $N = A B = (A + a_1)(B - b_1) = (A + a_2)(B - b_2)$ with $1 \le a_1 < a_2$ and $1 \le b_1 < b_2$. If $A \ge B$, then Theorem \ref{thm0} follows from Theorem \ref{thm1}. So, let us suppose $B > A$ from now on.

\bigskip

Case 1: $B - b_2 \ge A + a_2$. Then we can rearrange the factorizations as
\[
N = (B - b_2) (A + a_2) = (B - b_2 + (b_2 - b_1)) (A + a_2 - (a_2 - a_1)) = (B - b_2 + (b_2)) (A + a_2 - (a_2)).
\]
Applying Theorem \ref{thm1} with $B - b_2$, $A +a_2$, $b_2 - b_1$, $a_2 - a_1$, $b_2$, $a_2$ in place of $A$, $B$, $a_1$, $b_1$, $a_2$, $b_2$ respectively, we get
\[
B - b_2 \le \frac{1}{4} b_2 (b_2 - 3) (b_2 + 1)
\]
which gives Theorem \ref{thm0} as $\frac{1}{4} x (x - 3) (x + 1) + x = \frac{1}{4} x (x - 1)^2$. This identity shows the connection between the upper bounds in Theorems \ref{thm0.5} and \ref{thm1}.

\bigskip

Case 2: $B - b_2 < A + a_2$. Then we have $B > \sqrt{N} > B - b_2$, $A < \sqrt{N} < A + a_2$ and $a_2 + b_2  > B - A$. By Theorem \ref{thm2},
\[
\max \{ B - A, |(B - b_1) - (A + a_1)|, (A + a_2) - (B - b_2) \} \ge \lfloor 2 N^{1/4} \rfloor.
\]
If $(a_1 + b_1) - (B - A) \ge 0$, then $(a_2 + b_2) - (B - A) > (a_1 + b_1) - (B - A) \ge 0$. If $(a_1 + b_1) - (B - A) < 0$, then $B - A > (B - A) - (a_1 + b_1) > 0$. In either case, the middle term $|(B - b_1) - (A + a_1)|$ cannot be the maximum. Therefore,
\begin{equation} \label{basic-ineq}
\max \{ B - A, (a_2 + b_2) - (B - A) \} \ge \lfloor 2 (A B)^{1/4} \rfloor \ge \lfloor 2 \sqrt{A} \rfloor
\end{equation}
Since $a_2 + b_2 > B - A > 0$, the above inequality implies
\[
2 \max(a_2, b_2) - 1 \ge a_2 + b_2 - 1 > 2 \sqrt{A} - 1 \; \text{ or } \; A \le \max(a_2, b_2)^2 - 1,
\]
and
\[
B < A + a_2 + b_2 \le \max(a_2, b_2)^2 + 2 \max(a_2, b_2) - 1 \le \frac{1}{4} \max(a_2, b_2) (\max(a_2, b_2) - 1)^2
\]
for $\max(a_2, b_2) \ge 7$ as $\frac{1}{4} x (x - 1)^2 - (x^2 + 2x - 2) = \frac{1}{4} x (x - 7)(x + 1) + 2$ is positive and increasing when $x \ge 7$. This gives the easier version of Theorem \ref{thm0.5}.
\end{proof}

\section{Proof of Theorem \ref{thm0.5} for $\max(a_2, b_2) \ge 5$}

\begin{proof}
To get down to $\max(a_2, b_2) \ge 5$, we need to do more careful analysis on the close factorizations in case 2 from the previous section.

\bigskip

\noindent Subcase 1: $B - b_1 > A + a_2$. Then one can focus on the following three close factorizations
\[
N = (A + a_2) (B - b_2) = (B - b_1) (A + a_1) = B A
\]
instead. Since $B > B - b_1 > A + a_2 > \sqrt{N}$, we can apply Theorem \ref{thm1} and get
\[
A + a_2 \le \frac{1}{4} (B - A - a_2) (B - A - a_2 - 3) (B - A - a_2 + 1) < \frac{1}{4} b_2 (b_2 - 3)(b_2 + 1)
\]
as $B - A - a_2 < b_2$ and $x (x - 3) (x + 1)$ is an increasing function for $x \ge 3$. Hence,
\[
B < A + a_2 + b_2 < \frac{1}{4} b_2 (b_2 - 3)(b_2 + 1) + b_2
\]
and we have Theorem \ref{thm0.5}.

\bigskip

\noindent Subcase 2: $B - b_1 < A + a_2 < B$. Then one can focus on the following three close factorizations
\[
N = (B - b_2)(A + a_2) = (B - b_1)(A + a_1) = (A + a_2)(B - b_2)
\]
instead. This is of the form
\begin{equation} \label{til}
N = \tilde{A} \tilde{B} = (\tilde{A} + \tilde{a}_1) (\tilde{B} - \tilde{b}_1) = (\tilde{A} + \tilde{a}_2) (\tilde{B} - \tilde{b}_2)
\end{equation}
with 
\[
\tilde{A} = B - b_2, \; \tilde{B} = A + a_2, \; \tilde{a}_1 = b_2 - b_1, \; \tilde{b}_1 = a_2 - a_1, \; \tilde{a}_2 = \tilde{b}_2 = a_2 + b_2 + A - B.
\]
One has $\tilde{a}_2 + \tilde{b}_2 = 2 (\tilde{B} - \tilde{A})$. We also have $\tilde{B} = A + a_2 > B - b_2 = \tilde{A}$ and $\tilde{B} - \tilde{b}_2 = B - b_2 < A + a_2 = \tilde{A} + \tilde{a}_2$. Hence, we are in case 2 above and can apply inequality \eqref{basic-ineq} to get
\[
b_2 > A - B + a_2 + b_2 = \tilde{B} - \tilde{A} \ge \lfloor 2 \sqrt{\tilde{A}} \rfloor \ge 2 \sqrt{B - b_2} - 1
\]
Hence,
\[
B < \Bigl( \frac{b_2 + 1}{2} \Bigr)^2 + b_2
\]
which gives Theorem \ref{thm0.5} as $\frac{1}{4} x (x - 1)^2 - (\frac{x+1}{2})^2 - x = \frac{1}{4} (x + 1)(x^2 - 4 x - 1)$ is positive and increasing for $x \ge 5$.

\bigskip

\noindent Subcase 3: $B - b_1 < B \le A + a_2$. Then one can focus on the following three close factorizations
\[
N = A B = (A + a_1) (B - b_1) = B A
\]
instead. This is of the form
\[N = \tilde{A} \tilde{B} = (\tilde{A} + \tilde{a}_1) (\tilde{B} - \tilde{b}_1) = (\tilde{A} + \tilde{a}_2) (\tilde{B} - \tilde{b}_2)
\]
with
\[
\tilde{A} = A, \; \tilde{B} = B, \; \tilde{a}_1 = a_1, \; \tilde{b}_1 = b_1, \; \tilde{a}_2 = \tilde{b}_2 = B - A.
\]
One has $\tilde{B} - \tilde{b}_2 = A < B = \tilde{A} + \tilde{a}_2$ and $\tilde{a}_2 + \tilde{b}_2 = 2(\tilde{B} - \tilde{A})$. Hence, we are in case 2 above and can apply inequality \eqref{basic-ineq} to get
\[
a_2 \ge B - A = \tilde{B} - \tilde{A} \ge \lfloor 2 \sqrt{\tilde{A}} \rfloor \ge 2 \sqrt{A} - 1 \; \; \text{ or } \; \; A \le \Bigl(\frac{a_2 + 1}{2} \Bigr)^2.
\]
Hence,
\[
B \le A + a_2 \le \Bigl(\frac{a_2 + 1}{2} \Bigr)^2 + a_2
\]
which gives Theorem \ref{thm0.5} in the same way as subcase 2 above.

\bigskip

\noindent Subcase 4: $B - b_1 = A + a_2$. Since $(A + a_1) (B - b_1) = (A + a_2) (B - b_2)$, we also have $B - b_2 = A + a_1$. Therefore, $B - A = a_2 + b_1 = a_1 + b_2$. From $A B = (A +a_2)(B - b_2)$, we have
\[
a_2 (B - A) - (b_2 - a_2) A = a_2 b_2 \; \; \text{ and } \; \; (a_2 - b_2) B + b_2 (B - A) = a_2 b_2
\]
Substituting $B - A = a_1 + b_2$ and $B - A = a_2 + b_1$ to the above equations respectively, we get
\[
(b_2 - a_2) A = a_1 a_2 \; \; \text{ and } \; \; (b_2 - a_2) B = b_1 b_2.
\]
If $b_2 - a_2 \ge 2$, the above yields $A, B \le \frac{\max(a_2, b_2)^2}{2}$ which gives Theorem \ref{thm0.5} as $\frac{1}{4} x (x - 1)^2 - \frac{x^2}{2} = \frac{1}{4} x (x^2 - 4x + 1)$ is positive and increasing for $x \ge 4$. It remains to deal with the situation where $b_2 = a_2 + 1$. Then we have $A = a_1 a_2$ and $B = b_1 b_2$. Substituting these into $A B = (A + a_2)(B - b_2)$, we obtain $b_1 = a_1 + 1$. Hence,
\[
A = a_1 a_2 \; \;  \text{ and } \; \; B = (a_1 + 1)(a_2 + 1)
\]
with $1 \le a_1 < a_2$. The three close factorizations are
\[
a_1 a_2 \cdot (a_1 + 1)(a_2 + 1) = a_2 (a_1 + 1) \cdot a_1 (a_2 + 1) = a_1 (a_2 + 1) \cdot (a_1 + 1) a_2.
\]
We have $\max_i (a_i, b_i) = b_2 = a_2 + 1$ and $A < B = (a_1 + 1)(a_2 + 1) \le a_2 (a_2 + 1) \le \frac{1}{4} (a_2 + 1) (a_2 + 1 - 1)^2$ when $a_2 + 1 \ge 5$. This gives Theorem \ref{thm0.5} and shows why we need $\max(a_2, b_2) \ge 5$.

\bigskip

{\it Remark:} Even though the proof requires $\max(a_2, b_2) \ge 5$ for subcases 2 and 3 above, there is no difficulty with lowering it to $\max(a_2, b_2) \ge 4$. One can check that when $\max(a_2, b_2) = 4$, potential counterexamples can only happen when $B = 10$ since the bound from Theorem \ref{thm0.5} is $\frac{1}{4} \cdot 4 (4 - 1)^2 = 9$ while the bound from subcases 2 and 3 is $(\frac{4 + 1}{2})^2 + 4 = 10.25$. And we have $b_2 = 2, 3$ or $4$.

\bigskip

\noindent If $b_2 = 2$, then $b_1 = 1$. The integer $N$ satisfies $N = 10 A = 9 (A + a_1) = 8 (A + a_2)$. In particular, $2^3 \cdot 3^2 \cdot 5 = 360$ divides $N$ which implies $A \ge  36 > B$, a contradiction.

\bigskip

\noindent If $b_2 = 3$, then $b_1 = 1$ or $2$. The integer $N$ satisfies $N = 10 A = 9 (A + a_1) = 7 (A +a_2)$ or $N = 10 A = 8 (A + a_1) = 7 (A + a_2)$. Then, $630$ or $280$ divides $N$ which also gives a contradiction.

\bigskip

\noindent If $b_2 = 4$, then $b_1 = 1, 2$ or $3$. The integer $N$ satisfies $N = 10 A = 9 (A + a_1) = 6 (A + a_2)$ or $N = 10 A = 8 (A + a_1) = 6(A + a_2)$ or $N = 10 A = 7 (A + a_1) = 6 (A + a_2)$. The last two cases imply that $120$ or $280$ divides $N$ which gives a contradiction. In the first case, $A < B = 10$ is divisble by $9$. Hence, $A = 9$. Then $N = 90 = 9 \cdot 10 = 10 \cdot 9 = 15 \cdot 6$ as the only possible close factorizations. We have $a_2 = 6$, $b_2 = 4$ and $B = 10 \le \frac{1}{4} \cdot 6 (6 - 1)^2$. So, there is no counterexample to Theorem \ref{thm0.5} from these subcases, and the only counterexample comes from subcase 4 when $a_1 = 2$ and $a_2 = 3$.
\end{proof}

\section{Proof of Theorem \ref{thm2}}
\begin{proof}
To prove the theorem, it suffices to show that $\max_i |x_i - y_i| > 2 N^{1/4} - 1 + \frac{1}{2 N^{1/4} - 1}$ as $\lfloor 2 N^{1/4} + \frac{1}{2 N^{1/4} - 1} \rfloor$ is the smallest integer greater than $2 N^{1/4} - 1 + \frac{1}{2 N^{1/4} - 1}$. Without loss of generality, we focus on the situation where $x_1 < x_2 \le \sqrt{N}$. The case $x_3 > x_2 \ge \sqrt{N}$ is similar by symmetry as $y_3 < y_2 \le \sqrt{N}$. Then $y_1 > y_2 \ge \sqrt{N}$. If $|x_2 - y_2| > 2 N^{1/4} - 1/2$, then we are done as $N \ge 6$. Suppose $|x_2 - y_2| \le 2 N^{1/4} - 1/2$. Observe that
\[
y_2 - \sqrt{N} = \frac{N}{x_2} - \sqrt{N} = \frac{\sqrt{N}}{x_2} (\sqrt{N} - x_2) \ge \sqrt{N} - x_2.
\]
Hence,
\[
\sqrt{N} - x_2 \le \frac{(y_2 - \sqrt{N}) + (\sqrt{N} - x_2)}{2} \le N^{1/4} - \frac{1}{4} \; \; \text{ or } \; \; x_2 \ge \sqrt{N} - N^{1/4} + \frac{1}{4}.
\]
Say $x_1 = x_2 - b$ and $y_1 = y_2 + a$ for some integers $a, b > 0$. Then
\begin{equation} \label{key-eq}
(x_2 - b) (y_2 + a) = x_2 y_2 \; \; \text{ or } \; \; (a - b) x_2 = a b + b(y_2 - x_2) > 0. 
\end{equation}
Hence, $a > b$ and
\begin{align} \label{ineq}
\sqrt{N} - N^{1/4} + \frac{1}{4} \le x_2  \le& \Bigl( \frac{a + b}{2} \Bigr)^2 - \Bigl( \frac{a - b}{2} \Bigr)^2 + \Bigl( \frac{a + b}{2} \Bigr) (y_2 - x_2) - \Bigl( \frac{a - b}{2} \Bigr) (y_2 - x_2) \nonumber \\
\le& \Bigl( \frac{a + b}{2} \Bigr)^2 + \Bigl( \frac{a + b}{2} \Bigr) (y_2 - x_2) - \frac{3}{4}
\end{align}
Now, consider
\[
|x_1 - y_1| = |(x_2 - b) - (y_2 + a)| = (y_2 - x_2) + (a + b)
\]
Set $\Sigma := a + b$ and $\Delta := y_2 - x_2$. From \eqref{ineq}, we have
\[
4 \sqrt{N} - 4 N^{1/4} + 5 < \Sigma^2 + 2 \Sigma \Delta + 1 \le (\Sigma + \Delta)^2,
\]
and
\[
|x_1 - y_1| > \sqrt{(2 N^{1/4} - 1)^2 + 4} \ge 2 N^{1/4} - 1 + \frac{1}{2 N^{1/4} - 1}
\]
as $\sqrt{1 + x} \ge 1 + x / 4$ for $0 \le x \le 8$. This gives Theorem \ref{thm2}. The lower bound is optimal in view of the following examples: For any integer $K \ge 1$,
\begin{equation} \label{poly-xy}
\left\{
\begin{array}{ll}
x_1 = K^2 + K = K (K + 1), & y_1 = K^2 + 3K + 2 = (K + 1)(K + 2); \\
x_2 = K^2 + 2K = K (K + 2), & y_2 = K^2 + 2K + 1 = (K + 1)^2; \\
x_3 = K^2 + 2K + 1 = (K + 1)^2, & y_3 = K^2 + 2K = K (K + 2);
\end{array}
\right.
\end{equation}
and
\begin{equation} \label{poly-N}
N = x_1 y_1 = x_2 y_2 = x_3 y_3 = K (K + 1)^2 (K + 2).
\end{equation}
Note that $N^{1/4} < K + 1$ and
\begin{align*}
(K+1) - N^{1/4} =& (K + 1) \Bigl[ 1 - \Bigl( \frac{K (K + 2)}{(K+1)^2} \Bigr)^{1/4} \Bigr] \\
=& (K + 1) \Bigl[1 - \Bigl(1 -  \frac{1}{(K+1)^2} \Bigr)^{1/4} \Bigr] < \frac{1}{4(K+1)} + \frac{1}{4 (K+1)^3}
\end{align*}
as
\begin{equation} \label{lem0}
(1 - x)^{1/4} > 1 - \frac{x}{4} - \frac{x^2}{4} \; \; \text{ for } \; \; 0 < x \le \frac{1}{2}
\end{equation}
(which can easily be verified by raising both sides to the fourth power). Then one can check that
\[
2 N^{1/4} < \max_{i} |x_i - y_i| = 2 (K + 1) < 2 N^{1/4} + \frac{1}{2 N^{1/4} - 1},
\]
and we have the entire Theorem \ref{thm2}.
\end{proof}

{\it Note:} The idea behind the above example is based on \eqref{key-eq} and \eqref{ineq}. We want $y_2 - x_2$ and $a - b$ small. Hence, we set $a = K + 1$, $b = K$ and $y_2 - x_2 = 1$. Then equation \eqref{key-eq} yields $x_2 = (K+1) K + K (1) = K (K + 2)$ and $y_2 = x_2 + 1 = (K + 1)^2$. The lattice points $(x_1, y_1)$ and $(x_3, y_3)$ are simply $(x_2 - b, y_2 + a)$ and $(y_2, x_2)$ respectively. 

\bigskip

One can construct other (non-optimal) polynomial examples easily. For instance, say $a = 2K+1$, $b = 2K$ and $y_2 - x_2 = K$. Then equation \eqref{key-eq} gives $x_2 = 2K (3K + 1)$ and $y_2 = x_2 + K = 3K (2K + 1)$. The lattice points $(x_1, y_1)$ and $(x_3, y_3)$ are simply $(x_2 - b, y_2 + a) = ((2K)(3K), (2K + 1)(3K + 1))$ and $(y_2, x_2) = (3K (2K+1), 2K(3K + 1))$ respectively. Here $\max_i |x_i - y_i| = 5 K + 1$ which has size about $\frac{5 \sqrt{6}}{6} N^{1/4} \approx 2.04124 N^{1/4}$.

\section{Proof of Theorem \ref{thm3} with lower bound $2 \sqrt{2} N^{1/4} - 2$}

\begin{proof}
We revert to the strategy in Theorem \ref{thm1}. With $\sqrt{N} \le x_1 < x_2 < x_3$, suppose
\[
(x_1, y_1) = (A, B), \; \; (x_2, y_2) = (A + a_1, B - b_1), \; \; (x_3, y_3) = (A + a_2, B - b_2)
\]
for $1 \le a_1 < a_2$ and $1 \le b_1 < b_2$. The equality $N = A B = (A + a_i)(B - b_i)$ gives
\[
a_i B - b_i A = a_i b_i \; \; \text{ or } \; \;  (a_i - b_i) B = a_i b_i + b_i (A - B).
\]
Set $d_i = a_i - b_i$ for $i = 1, 2$. Note that $d_i B \ge a_i b_i > 0$. So, $d_1, d_2 > 0$. Since $D = a_2 b_1 - b_2 a_1 = (a_2 - b_2) b_1 - (a_1 - b_1) b_2 = d_2 b_1 - d_1 b_2 > 0$ and $b_2 > b_1$, we must have $d_2 > d_1 \ge 1$. We also have
\begin{equation} \label{dB}
\left\{ \begin{array}{l}
d_1 B = b_1 (b_1 + d_1) + b_1 (A - B), \\
d_2 B = b_2 (b_2 + d_2) + b_2 (A - B).
\end{array} \right.
\end{equation}

Note that $\max_i |x_i - y_i|$ occurs at $i = 3$ as $|x - y| = x - \frac{N}{x}$ is an increasing function of $x$ when $x \ge \sqrt{N}$. Hence, our goal is to study
\begin{equation} \label{x3-y3-1}
x_3 - y_3 = (A + a_2) - (B - b_2) = (A - B) + (2 b_2  + d_2).
\end{equation}
From \eqref{dB}, we have
\begin{equation} \label{A-B}
A - B = \frac{d_2 B}{b_2} - b_2 - d_2.
\end{equation}
Thus,
\begin{align} \label{x3-y3-2}
x_3 - y_3 =& \frac{d_2 B}{b_2} - b_2 - d_2 + (2 b_2 + d_2) = \frac{d_2 B}{b_2} + b_2 \nonumber \\
\ge& 2 \sqrt{\frac{d_2 B}{b_2} \cdot b_2} = 2 \sqrt{d_2 B} = 2 \sqrt{d_2} \Bigl(\frac{B}{A}\Bigr)^{1/4} N^{1/4}
\end{align}
by the inequality $x + y \ge 2 \sqrt{x y}$ and $N = A B$. If $A - B > 2 \sqrt{2} N^{1/4} - 8$, then we are done as $b_2 > b_1 \ge 1$, $d_2 > d_1 \ge 1$ and $x_3 - y_3 = A - B + 2 b_2 + d_2 \ge A - B + 6$. So, we suppose $A - B \le 2 \sqrt{2} N^{1/4} - 8$. Since $A \ge \sqrt{N}$,
\[
\frac{A - B}{A} \le \frac{2 \sqrt{2} N^{1/4} - 8}{A} \le \frac{2 \sqrt{2} N^{1/4} - 8}{N^{1/2}} \; \; \text{ or } \; \; \frac{B}{A} \ge 1 - \frac{2 \sqrt{2}}{N^{1/4}} + \frac{8}{N^{1/2}}.
\]
By \eqref{lem0}, we have
\[
x_3 - y_3 > 2 \sqrt{2} \Bigl(1 - \frac{\sqrt{2}}{2 N^{1/4}} \Bigr) N^{1/4} \ge 2 \sqrt{2} N^{1/4} - 2
\]
when $N \ge 1024$.
\end{proof}

\section{Proof of the entire Theorem \ref{thm3}}

We shall refine the analysis from the previous section and need the following lemmas.

\begin{lemma} \label{lem1}
Suppose $A \ge B > 0$ with $A - B \le C$. Then
\[
\frac{B}{A} \ge 1 - \frac{C}{A}.
\]
\end{lemma}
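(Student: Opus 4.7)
The plan is to prove this by a direct algebraic rearrangement, since the statement is essentially a one-line manipulation of the given inequality. I would start from the hypothesis $A - B \le C$ and isolate $B$ to obtain $B \ge A - C$. Then, since $A \ge B > 0$ forces $A > 0$, I can divide both sides of $B \ge A - C$ by $A$ without flipping the inequality, giving $\frac{B}{A} \ge \frac{A - C}{A} = 1 - \frac{C}{A}$, which is exactly the claim.

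There is really no obstacle here; the only thing to be careful about is making sure the division by $A$ is legal, which is guaranteed by the positivity assumption $A \ge B > 0$. The hypothesis $A \ge B$ (as opposed to just $A, B > 0$) is not strictly needed for the inequality itself, but it is presumably included because the lemma will be applied in contexts (like the setup of Theorem \ref{thm3}, where $A - B \ge 0$) in which it is natural. I would therefore present the proof in three short lines — rewrite, divide, simplify — without any further commentary, and move on to use the lemma to sharpen the estimate $\frac{B}{A} \ge 1 - \frac{2\sqrt{2}}{N^{1/4}} + \frac{8}{N^{1/2}}$ from the previous section into whatever refined form is needed to push the lower bound from $2\sqrt{2} N^{1/4} - 2$ up to $\lfloor 2\sqrt{2} N^{1/4} + 1/2 \rfloor$.
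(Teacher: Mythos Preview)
Your proof is correct and matches the paper's own argument exactly: the paper simply says the lemma ``follows easily by dividing $A - B \le C$ through by $A$ and rearranging terms,'' which is precisely what you do.
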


\begin{proof}
This follows easily by dividing $A - B \le C$ through by $A$ and rearranging terms.
\end{proof}

\begin{lemma} \label{lem2}
If two real numbers $0 < x \le y$ have difference $y - x \ge d$, then
\[
x + y \ge 2 \sqrt{x y} + \frac{d^2}{4 y}.
\]
\end{lemma}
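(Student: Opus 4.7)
The plan is to rewrite the desired inequality in a form where the standard AM--GM identity
\[
x + y - 2\sqrt{xy} = (\sqrt{y} - \sqrt{x})^2
\]
makes the structure transparent. After this rewriting, the claim becomes
\[
(\sqrt{y} - \sqrt{x})^2 \ge \frac{d^2}{4y},
\]
so it suffices to show $\sqrt{y} - \sqrt{x} \ge \dfrac{d}{2\sqrt{y}}$.

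To establish that, I would rationalize in the usual way: write
\[
\sqrt{y} - \sqrt{x} = \frac{y - x}{\sqrt{y} + \sqrt{x}}.
\]
The numerator is at least $d$ by hypothesis, and the denominator satisfies $\sqrt{y} + \sqrt{x} \le 2\sqrt{y}$ because $0 < x \le y$. Combining these two bounds gives $\sqrt{y} - \sqrt{x} \ge d/(2\sqrt{y})$; squaring (both sides are nonnegative) yields the required inequality, and adding $2\sqrt{xy}$ to both sides completes the proof.

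There is essentially no obstacle here: the only thing to watch is that the inequality $\sqrt{y} + \sqrt{x} \le 2\sqrt{y}$ needs $x \le y$, which is part of the hypothesis, and that $d$ may be taken nonnegative (if $d \le 0$ the statement reduces to plain AM--GM). The lemma is therefore a short, self-contained refinement of AM--GM, quantifying the slack in terms of the gap $y - x$, and it is exactly the sharpening of the bound $x + y \ge 2\sqrt{xy}$ used in \eqref{x3-y3-2} that will be needed in the next section to push the lower bound from $2\sqrt{2}\,N^{1/4} - 2$ up to $\lfloor 2\sqrt{2}\,N^{1/4} + 1/2 \rfloor$.
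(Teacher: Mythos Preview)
Your proof is correct and follows exactly the same route as the paper: rationalize $\sqrt{y}-\sqrt{x}=(y-x)/(\sqrt{y}+\sqrt{x})\ge d/(2\sqrt{y})$, then square and rearrange. The only difference is expository---you spell out the identity $x+y-2\sqrt{xy}=(\sqrt{y}-\sqrt{x})^2$ explicitly, whereas the paper just says ``squaring both sides and rearranging terms.''
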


\begin{proof}
Observe that
\[
\sqrt{y} - \sqrt{x} = \frac{y - x}{\sqrt{y} + \sqrt{x}} \ge \frac{d}{2 \sqrt{y}}.
\]
The lemma follows after squaring both sides and rearranging terms.
\end{proof}

\begin{proof}[Proof of Theorem \ref{thm3}]

To prove the theorem, it suffices to establish $x_3 - y_3 > 2 \sqrt{2} N^{1/4} - 1/2$ since $\lfloor 2 \sqrt{2} N^{1/4} + 1/2 \rfloor$ is the smallest integer greater than $2 \sqrt{2} N^{1/4} - 1/2$. If $A - B > 2 \sqrt{2} N^{1/4}$, then $x_3 - y_3 = A - B + 2 b_2 + d_2 > A - B + 1$ by \eqref{x3-y3-1} and we are done. Henceforth, we suppose $A - B \le 2 \sqrt{2} N^{1/4}$. From \eqref{x3-y3-2}, we have $x_3 - y_3 \ge 2 \sqrt{3} (\frac{B}{A})^{1/4} N^{1/4}$ if $d_2 \ge 3$. By Lemma \ref{lem1} and $A \ge \sqrt{N}$,
\[
x_3 - y_3 \ge 2 \sqrt{3} \Bigl(1 - \frac{2 \sqrt{2} N^{1/4}}{A} \Big)^{1/4} N^{1/4} \ge 2 \sqrt{3} \Bigl(1 - \frac{2 \sqrt{2}}{N^{1/4}} \Big)^{1/4} N^{1/4} \ge 2.9 N^{1/4} 
\]
when $N \ge 1024$. So, Theorem \ref{thm3} is true whenever $d_2 \ge 3$. 

\bigskip

It remains to deal with the case $d_2 = 2$ and $d_1 = 1$ since $d_2 > d_1 \ge 1$. Then $a_1 = b_1 + 1$, $a_2 = b_2 + 2$ and $D = 2 b_1 - b_2$. From \eqref{basic2} and \eqref{basic3}, we have
\begin{equation} \label{ABmore}
A = \frac{(b_2 + 2)(b_1 + 1)(b_2 - b_1)}{2 b_1 - b_2}, \; \; B = \frac{b_2 b_1 (b_2 - b_1 + 1)}{2 b_1 - b_2}, \; \; A - B = \frac{b_2^2 - 2 b_1^2 + 2 b_2 - 2 b_1}{2 b_1 - b_2}.
\end{equation}
Suppose $A - B \le N^{0.2}$. By Lemma \ref{lem1} and inequality \eqref{lem0},
\[
x_3 - y_3 \ge 2 \sqrt{2} \Bigl(1 - \frac{N^{0.2}}{A} \Big)^{1/4} N^{1/4} > 2 \sqrt{2} N^{1/4} - \frac{1}{\sqrt{2} N^{0.05}} - \frac{1}{\sqrt{2} N^{0.35}} > 2 \sqrt{2} N^{1/4} - \frac{1}{2}.
\]
for $N \ge 5000$. It remains to consider $N^{0.2} < A - B \le 2 \sqrt{2} N^{1/4}$. From \eqref{ABmore}, we have
\begin{equation} \label{almost}
b_2^2 - 2 b_1^2 + 2 b_2 - 2 b_1 > N^{0.2} (2 b_1 - b_2) \; \text{ or } \; b_2^2 - 2 b_1^2 > (2 N^{0.2} + 2) b_1 - (N^{0.2} + 2) b_2.
\end{equation}
Suppose $0 \ge b_2^2 - 2 b_1^2$. Then \eqref{almost} implies $b_2 \ge \frac{2 N^{0.2} + 2}{N^{0.2} + 2} b_1 > \sqrt{3} b_1$ when $N \ge 5000$. This yields the contradiction $0 > b_2^2 - 2 b_1^2 > 3 b_1^2 - 2 b_1^2 > 0$. Therefore, we must have $b_2^2 - 2 b_1^2 > 0$ or $b_2 > \sqrt{2} b_1$.

\bigskip

Suppose $\sqrt{2} b_1 < b_2 \le (\sqrt{2} + N^{-0.1}) b_1$ and $N \ge 2^{20}$. As  $b_2^2 - 2 b_1^2 = (b_2 - \sqrt{2} b_1)(b_2 + \sqrt{2} b_1)$, inequality \eqref{almost} gives
\[
\frac{(2 \sqrt{2} + N^{-0.1})}{N^{0.1}} > (2 - \sqrt{2}) N^{0.2} - N^{0.1} - (2 \sqrt{2} - 2) - 2 N^{-0.1}
\]
which is impossible as $(2 - \sqrt{2}) x^3 - x^2 - (2 \sqrt{2} - 2) x - 2 > 10$ for $N^{0.1} = x \ge 4$. Thus, $2 b_1 - 1 \ge b_2 > (\sqrt{2} + N^{-0.1}) b_1$. From \eqref{A-B}, we have
\begin{equation} \label{b1}
\sqrt{N} \le A \le (2 b_1 + 1) (b_1 + 1) (b_1 - 1) < 2 b_1^3 + b_1^2 \le 2.5 b_1^3 \; (\text{as } b_1 > 1) \; \text{ or } \; b_1 > \frac{N^{1/6}}{\sqrt[3]{2.5}},
\end{equation}
and
\begin{equation} \label{d}
d = \frac{2B}{b_2} - b_2 = \frac{2 b_1 (b_2 - b_1 + 1)}{2 b_1 - b_2} - b_2 = \frac{b_2^2 - 2 b_1^2 + 2 b_1}{2b_1 - b_2}
\end{equation}
Applying Lemma \ref{lem2} with \eqref{d} to \eqref{x3-y3-2}, the formula of $B$ in \eqref{A-B}, Lemma \ref{lem1} as $A - B \le 2 \sqrt{2} N^{1/4}$, and inequalities \eqref{lem0} and \eqref{b1}, we get
\begin{align*}
x_3 - y_3 \ge& 2 \sqrt{2 B} + \frac{(\frac{b_2^2 - 2 b_1^2 + 2 b_1}{2b_1 - b_2})^2}{8 B / b_2} = 2 \sqrt{2 B} + \frac{(b_2^2 - 2 b_1^2 + 2 b_1)^2}{8 (2b_1 - b_2) (b_2 - b_1 + 1)} \\
\ge& 2 \sqrt{2} \Bigl( \frac{B}{A} \Bigr)^{1/4} (A B)^{1/4} + \frac{(2 \sqrt{2} b_1^2 / N^{0.1})^2}{8 (2 - \sqrt{2}) b_1^2} \\
>& \Bigl( 2 \sqrt{2} - \frac{2}{N^{1/4}} - \frac{4 \sqrt{2}}{N^{1/2}} \Bigr) N^{1/4} + \frac{b_1^2}{(2 - \sqrt{2}) N^{0.2}} \\
>& 2 \sqrt{2} N^{1/4} - 2 - \frac{4 \sqrt{2}}{N^{1/4}} + \frac{N^{1/15}}{\sqrt[3]{6.25} (2 - \sqrt{2})} > 2 \sqrt{2} N^{1/4}
\end{align*}
when $N \ge 2^{20}$.

\bigskip

Now, we are going to construct examples to show that the lower bound is best possible. We pick $d_1 = 1$ and $d_2 = 2$ based on the above proof. Set $A - B = k$. From \eqref{dB}, we want
\begin{equation} \label{dB2}
B = b_1 (b_1 + k + 1) \; \; \text{ and } \; \; 2 B = b_2 (b_2 + k + 2).
\end{equation}
Pick $k = 2$ (other choices of $k$ would yield similar examples). Then
\begin{equation} \label{pell}
b_2 (b_2 + 4) = 2 b_1 (b_1 + 3) \; \; \text{ or } \; \; (2 b_1 + 3)^2 - 2 (b_2 + 2)^2 = 1.
\end{equation}
The Pell equation $x^2 - 2 y^2 = 1$ has infinitely many integer solutions given by $x_n + \sqrt{2} y_n = (3 + 2 \sqrt{2})^n$ and $x_n$ is always odd by considering the equation $(\bmod \, 4)$. Thus, we have
\[
b_1 = \frac{x_n - 3}{2}, \; b_2 = y_n - 2, \; a_1 = \frac{x_n - 1}{2}, \; a_2 = y_n, \; B = b_1 (b_1 + 3), \; A = (b_1 + 1)(b_1 + 2).
\]
One can check that
\[
B - b_1 = b_1 (b_1 + 2), \; \; A + a_1 = (b_1 + 1)(b_1 + 3),
\]
\[
\; B - b_2 = \frac{b_2 (b_2 + 2)}{2}, \; \; A + a_2 = \frac{(b_2 + 2)(b_2 + 4)}{2},
\]
and
\begin{align*}
(B - b_2)(A + a_2) =& \frac{b_2 (b_2 + 2)}{2} \cdot \frac{(b_2 + 2) (b_2 + 4)}{2} \\
=& \frac{(b_2 + 2)^2}{2} \cdot \frac{b_2 (b_2  +4)}{2} = b_1 (b_1 + 1) (b_1 + 2) (b_1  +3) = A B.  
\end{align*}
The ratio
\[
\frac{x_3 - y_3}{N^{1/4}} = \frac{A - B + 2 b_2 + d_2}{N^{1/4}} = \frac{2 (b_2 + 2)}{( b_1 (b_1 + 1) (b_1 + 2)(b_1 + 3) )^{1/4}} = 2 \sqrt{2} \Bigl( \frac{(b_1 + 1)(b_1 + 2)}{b_1 (b_1 + 3)} \Bigr)^{1/4}
\]
by \eqref{pell}. Note that
\[
\Bigl( \frac{(x + 1)(x + 2)}{x (x + 3)} \Bigr)^{1/4} = \Bigl(1 + \frac{2}{x (x + 3)} \Bigr)^{1/4} < 1 + \frac{2}{4 x (x+3)} < 1 + \frac{1}{\sqrt{x (x + 1) (x + 2) (x + 3)}}
\]
for $x \ge 1$. Hence,
\[
2 \sqrt{2} N^{1/4} < x_3 - y_3 < 2 \sqrt{2} N^{1/4} + \frac{2 \sqrt{2}}{N^{1/4}} \le 2 \sqrt{2} N^{1/4} + 0.1
\]
for $N \ge 2^{20}$. This shows that the lower bound $\lfloor 2 \sqrt{2} N^{1/4} + 1/2 \rfloor$ is best possible.
\end{proof}

\bigskip

{\it Remark:} The above examples come from solutions of Pell equation which has exponential growth. One can imitate the construction in Theorem \ref{thm2} to create polynomial examples with slightly larger constants. For instance, choose
\[
\left\{
\begin{array}{ll}
x_1 = 2K^2- 2 = 2(K + 1)(K - 1), & y_1 = 2K^2 - K = K (2 K - 1); \\
x_2 = 2K^2 + K - 1 = (2K - 1)(K + 1), & y_2 = 2 K^2 - 2 K = 2 K (K - 1); \\
x_3 = 2K^2 + 2K = 2K (K+1), & y_3 = 2 K^2 - 3K + 1 = (2K - 1)(K - 1);
\end{array}
\right.
\]
and
\[
N = x_1 y_1 = x_2 y_2 = x_3 y_3 = (K - 1) (K + 1) 2K (2K - 1).
\]
Then $x_3 > x_2 > x_1 > \sqrt{N}$ and $\max_{i} |x_i - y_i| = 5K - 1$ which is about $2.5 \sqrt{2} N^{1/4}$.


Department of Mathematics \\
Kennesaw State University \\
Marietta, GA 30060 \\
tchan4@kennesaw.edu


\begin{thebibliography}{9}
\bibitem{C1} T.H. Chan, Numbers with three close factorizations and lattice points on hyperbolas, {\it Integers} {\bf 18} (2018), A13, 9 pp.

\bibitem{CJ1} J. Cilleruelo and J. Jim\'{e}nez-Urroz, Divisors in a Dedekind domain, {\it Acta Arith.} {\bf 85} (1998), 229--233.

\bibitem{CJ2} J. Cilleruelo and J. Jim\'{e}nez-Urroz, The hyperbola $x y = N$, {\it Journal de Th\'{e}orie des Nombres de Bordeaux} {\bf 12} (2000), 87--92.

\bibitem{GJ} A. Granville and J. Jim\'{e}nez-Urroz, The least common multiple and lattice points on hyperbolas, {\it Quarterly Journal of Mathematics} (Oxford) {\bf 51} (2000), 343--352.
\end{thebibliography}
\end{document}